\newtheorem{thm}{Theorem}[section]
\newtheorem{cor}[thm]{Corollary}
\newtheorem{lem}[thm]{Lemma}
\newtheorem{claim}{Claim}
\begin{document}	
	\title{Generalized Tur\'an results for disjoint copies of degenerate graphs}

    \author{%
    Caihong Yang\textsuperscript{\textdagger}\textsuperscript{\textdaggerdbl},%
    \quad Jiasheng Zeng\textsuperscript{\S}%
}
\footnotetext[1]{\scriptsize%
\noindent\textsuperscript{\textdagger}School of Mathematics and Statistics, Fuzhou University, Fuzhou 350108, China;%
\textsuperscript{\textdaggerdbl}Extremal Combinatorics and Probability Group (ECOPRO), Institute for Basic Science (IBS), Daejeon 34126, South Korea. Email: chyang.fzu@gmail.com }
\footnotetext[2]{\scriptsize%
\noindent\textsuperscript{\S}School of Mathematical Sciences, Shanghai Jiao Tong University, Shanghai 200240, China. Email: jasonzeng@sjtu.edu.cn  }
%	\author{Caihong Yang$^{a,b}$, \quad Jiasheng Zeng$^{c}$\\
%		\small $^{a}$ School of Mathematics and Statistics,\\
%		\small Fuzhou University, Fuzhou, Fujian 350108, China.\\
 %           \small $^{b}$ Extremal Combinatorics and Probability Group (ECOPRO),\\
  %          \small Institute for Basic Science (IBS), Daejeon 34126, South Korea.\\
%		\small $^c$ School of Mathematical Sciences, \\
%		\small Shanghai Jiao Tong University, Shanghai, 200240, China.}
        
	\date{}
	
	\maketitle
	
	\begin{abstract}
    The generalized Tur\'{a}n number $\mathrm{ex}(n, H, F)$ denotes the maximum number of copies of $H$ in an $n$-vertex $F$-free graph. For an integer $t \geq 1$, let $tF$ be the vertex-disjoint union of $t$ copies of $F$. Gerbner, Methuku, and Vizer (2019) established an asymptotically sharp bound for $\mathrm{ex}(n,K_r,(t+1)K_{2,b})$. We extend their results in two directions by considering forbidden graphs $(t+1)K_{a,b}$ and $(t+1)C_{2k}$ and establish more precise matching upper and lower bounds of the same order of magnitude.
     
	\end{abstract}

 {\bf{Key words:}}{ Generalized Tur\'{a}n number, Complete bipartite graph, Even cycle}

  AMS Classification: 05C35
	
\section{Introduction}

The \emph{Tur\'{a}n number} of a graph $F$, denoted by $\mathrm{ex}(n,F)$, is defined as the maximum number of edges in an $F$-free graph on $n$ vertices. The systematic study of Tur\'{a}n numbers was initiated by Tur\'{a}n~\cite{41Turan} over seventy years ago and has since become a central topic in extremal graph theory (see the surveys~\cite{S13,Keevash11}).

Given two graphs $T$ and $F$, the \emph{generalized Tur\'{a}n number} $\mathrm{ex}(n, T, F)$ is defined as the maximum number of copies of $T$ that can appear in an $F$-free graph on $n$ vertices. This concept was systematically introduced by Alon and Shikhelman~\cite{AS16}. Since Alon and Shikhelman's foundational work, significant attention has been devoted to studying generalized Tur\'{a}n numbers for special structure, as evidenced by recent works~\cite{GGMV20, LA20, BJ24, ZCG23, G2023}. 

Particular attention has been given to counting cliques in $F$-free graphs. Alon and Shihelman~\cite{AS16} proved that $\mathrm{ex}(n,K_r,F)=\Theta(n^{r})$ if and only if the chromatic number $\chi(F)>r$. They also established bounds for the maximum number of cliques in $K_{a,b}$-free graphs, which are sharp for $r\geq 2$, $a\geq 2r-2$, and $b\geq (a-1)!+1$. Moreover, they determined the graphs $F$ for which $\mathrm{ex}(n,K_3,F)=O(n)$. Dubroff, Gunby, Narayanan, and Spiro \cite{DGNS} obtained the following general lower bounds. A graph F is said to be \emph{2-balanced} if for every subgraph $F'$ of $F$, we have $\frac{|E(F')|-1}{|V(F')|-2} \leq \frac{|E(F)|-1}{|V(F)|-2}$. They showed that if $F$ is $2$-balanced and has at least $2$ edges, then $\mathrm{ex}(n, K_r, F) = \Omega(n^{2-\frac{|V(F)|-2}{e(F)}-1})$ for $2<r<|V(F)|$. 

For an integer $t \geq 1$, let $tF$ denote the vertex-disjoint union of $t$ copies of a graph $F$. Given two graphs $G$ and $H$ whose vertex sets are disjoint, we define the \emph{join} $G \vee H$ of $G$ and $H$ to be the graph obtained from $G \sqcup H$ (the vertex-disjoint union of $G$ and $H$) by adding all edges that have nonempty intersection with both $V(G)$ and $V(H)$. Gerbner, Methuku, and Vizer \cite{GMV19} investigated the interesting phenomenon where $\mathrm{ex}(n,H,tF)$ and $\mathrm{ex}(n,H,F)$ exhibit different asymptotic behaviors for various graphs $H$ and $F$. For graphs $F_1,\ldots,F_t$ (not necessarily distinct) with each $F_i \neq K_2$ and their vertex-disjoint union $F$, they proved that $$\mathrm{ex}(n, K_3,F) = \Theta\left(\max_{1\leq i \leq t}\{\mathrm{ex}(n,K_3,F_i)\} + \max_{1\leq i < j \leq t} \mathrm{ex}(n,\{F_i,F_j\})\right),$$ and notably, for any integer $t \geq 2$ and graph $H$, they have $$\mathrm{ex}(n,K_3,tH) = \Theta(\mathrm{ex}(n,K_3,H) + \mathrm{ex}(n,H)),$$ revealing that while the order of magnitude of $\mathrm{ex}(n,K_3,tF)$ may increase when $t$ changes from 1 to 2, no further increase occurs for $t \geq 2$. Hou, Yang and Zeng\cite{HYZ24} determined the exact value of $\mathrm{ex}(n, K_3, (t+1)C_{2k+1})$. Gao, Wu, and Xue \cite{GWX23} extended this study to generalized theta graphs $F = F(p_1,\ldots,p_k)$ consisting of internally vertex-disjoint paths of lengths $p_1,\ldots,p_k$ connecting two fixed vertices, investigating $\mathrm{ex}(n,K_3,tF)$ for such graphs with a color-critical edge when $3 \leq r \leq t+1$ and $n$ is sufficiently large. Gerbner, Methuku, and Vizer \cite{GMV19} offered the best bound about complete bipartite graph and even cycle until this paper by proving that for $t\geq 1, b \geq 2$,  $$\mathrm{ex}(n,K_3,tK_{2,b}) = (1+o(1))\left(\frac{(t-1)(b-1)^{1/2}}{2} + \frac{(b-1)^{3/2}}{6}\right)n^{3/2}, $$
and for any $r,k\geq 2$,
$$\mathrm{ex}(n,K_r,tC_{2k}) = O(n^{1+1/k}).$$

 Additional related results appear in \cite{GLLSY25, ZCG23,ZZZ25,ZFF24,GL12}, with Gerbner and Palmer \cite{GP25} providing an excellent survey of this area. Motivated by these results, we give some better bounds by counting cliques in  $tK_{a,b}$-free graphs, as well as in $tC_{2k}$-free graphs.

Fix an integer $r\ge 2$, an $r$-graph $\mathcal{H}$ is a collection of $r$-subsets of some finite set $V$. We identify a hypergraph $\mathcal{H}$ with its edge set and use $V(\mathcal{H})$ to denote its vertex set. For an $r$-graph $\mathcal{H}$ and a vertex $v\in V(\mathcal{H})$ the \emph{degree} $d_{\mathcal{H}}(v)$ of $v$ in $\mathcal{H}$ is the number of edges in $\mathcal{H}$ containing $v$. For $r=2$, let $G$ be a graph. For a subset $X$ of $V(G)$, we use $G[X]$ to denote the subgraph of $G$ induced by $X$. Denote $N_G(v)$ be the neighborhood of $v$ in $G$ and $d_G(v)=|N_G(v)|$. We use $k_r(G)$ to denote the number of cliques in $G$. We set for convenience that $\mathrm{ex}(n,K_0,H)=1$, $\mathrm{ex}(n,K_1,H)=n$ and $\mathrm{ex}(n,K_2,H)=\mathrm{ex}(n,H)$. Our first result extends the research of $\mathrm{ex}(n,K_3,tK_{2,b})$ to $\mathrm{ex}(n,K_r,tK_{a,b})$. And we get a sharp order of magnitude bound on it.

\begin{thm}\label{Kr(t+1)Kab}
    For any $t\geq r\geq 3$, $a\geq 2(r-1)$, $b\geq (a-1)!+1$, and $n$ large enough, we have that $$\mathrm{ex}(n,K_r,(t+1)K_{a,b})\leq \sum_{s=0}^{r}\binom{t}{r-s}\left(\frac{1}{s!}+o(1)\right)\left(b-1\right)^{\frac{s(s-1)}{2a}}(n-t)^{s-\frac{s(s-1)}{2a}},$$
    and 
    $$\mathrm{ex}(n,K_r,(t+1)K_{a,b})\geq \sum_{s=0}^{r}\binom{t}{r-s}\left(\frac{1}{s!} + o(1)\right)(n-t)^{s - \frac{s(s-1)}{2a}}.$$
\end{thm}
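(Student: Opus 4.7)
The plan is to prove the upper and lower bounds in parallel, organizing both around a common ``hub--free'' decomposition: a small set $S$ of vertices joined completely to a $K_{a,b}$-free graph on the remaining $n-|S|$ vertices.

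For the upper bound, let $G$ be an $n$-vertex $(t+1)K_{a,b}$-free graph. I would take a maximum vertex-disjoint packing $\mathcal{M}$ of copies of $K_{a,b}$ in $G$; the hypothesis forces $|\mathcal{M}|\le t$, so $S:=V(\mathcal{M})$ satisfies $|S|\le t(a+b)=O(1)$, and $G-S$ is $K_{a,b}$-free by maximality. Partitioning each $K_r$ of $G$ according to its intersection size $r-s$ with $S$ yields
\[
k_r(G)\;\le\;\sum_{s=0}^{r}\binom{|S|}{r-s}\,k_s(G-S).
\]
Since $a\ge 2(r-1)\ge 2(s-1)$ and $b\ge(a-1)!+1$ for every $s\le r$, the Alon--Shikhelman clique bound applies to $G-S$:
\[
k_s(G-S)\;\le\;\Bigl(\tfrac{1}{s!}+o(1)\Bigr)(b-1)^{s(s-1)/(2a)}(n-t)^{s-s(s-1)/(2a)}.
\]
The exponent $s-s(s-1)/(2a)$ is strictly increasing in $s$ on $\{0,\ldots,r\}$ (since $a\ge r\ge s$), so the $s=r$ term dominates; any discrepancy between $\binom{|S|}{r-s}$ from the packing and the $\binom{t}{r-s}$ in the statement is a constant absorbed into the $o(1)$ on the $(n-t)^{r-r(r-1)/(2a)}$ factor.

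For the lower bound, I would take $G^{*}:=K_t\vee H$ where $H$ is a projective norm graph of Alon--R\'onyai--Szab\'o on $n-t$ vertices. Since $b\ge(a-1)!+1$, the graph $H$ is $K_{a,b}$-free; moreover, by the standard quasirandom codegree property of norm graphs (every $j$-wise codegree being $(1+o(1))(n-t)^{1-j/a}$ for $1\le j\le a-1$), a greedy codegree product gives
\[
k_s(H)=\Bigl(\tfrac{1}{s!}+o(1)\Bigr)(n-t)^{s-s(s-1)/(2a)}\qquad\text{for every }s\le r.
\]
Every $K_r$ of $G^{*}$ splits uniquely into $r-s$ vertices of $V(K_t)$ and an $s$-clique of $H$, giving
\[
k_r(G^{*})=\sum_{s=0}^{r}\binom{t}{r-s}\,k_s(H),
\]
which is the claimed lower bound. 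Finally $G^{*}$ is $(t+1)K_{a,b}$-free: any copy of $K_{a,b}$ in $G^{*}$ must meet $V(K_t)$ (otherwise it would lie in the $K_{a,b}$-free graph $H$), so $t+1$ vertex-disjoint copies would require $t+1$ distinct vertices from a set of size $t$, a contradiction.

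The main obstacle is obtaining the tight leading constant $1/s!$ for clique counts in the norm graph \emph{simultaneously} for every $s\le r$; rather than invoking Alon--Shikhelman as a black box, I would prove it directly via the greedy codegree product combined with a concentration statement that discards the overcount from ``degenerate'' tuples whose joint codegree deviates from $(n-t)^{1-j/a}$. A secondary technical point is the cosmetic mismatch between $\binom{|S|}{r-s}$ (produced by the packing argument) and the $\binom{t}{r-s}$ in the theorem; this is not a genuine obstruction because only the $s=r$ term controls the asymptotic and both binomials equal $1$ there, while every $s<r$ term is of strictly smaller order in $n$.
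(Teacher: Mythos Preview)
Your lower bound is exactly the paper's: both take $K_t\vee H$ with $H$ a projective norm graph and count cliques by splitting over $V(K_t)$. There is nothing to add there.

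Your upper bound is correct for the theorem \emph{as literally stated}, but it is a genuinely different and weaker argument than the paper's. You remove the vertex set $S$ of a maximal $K_{a,b}$-packing (so $|S|\le t(a+b)$), observe that $G-S$ is $K_{a,b}$-free, and then absorb the inflated binomials $\binom{|S|}{r-s}$ for $s<r$ into the $o(1)$ of the dominant $s=r$ term. This works because the exponents $s-s(s-1)/(2a)$ are strictly increasing in $s$, so only the $s=r$ term survives asymptotically, and there the two binomials are both equal to $1$.

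The paper does more work to obtain a sharper, non-asymptotic conclusion: it actually proves
\[
k_r(G)\;\le\;\sum_{s=0}^{r}\binom{t}{r-s}\,\mathrm{ex}(n-t,K_s,K_{a,b}),
\]
with the genuine coefficient $\binom{t}{r-s}$ on every term (this is the form the companion cycle result, Theorem~1.2, is stated in). To get the hub set down from size $t(a+b)$ to exactly $t$, the paper assumes the inequality fails, builds the $r$-uniform hypergraph $\mathcal{H}$ of $K_r$'s, and isolates the set $X$ of vertices of $\mathcal{H}$-degree at least $n^{\beta}/2|U|$ for a carefully chosen $\beta$ strictly between $r-1-\frac{(r-1)(r-2)}{2(a-1)}$ and $r-1-\frac{(r-1)(r-2)}{2a}$. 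Four claims then show $|X\cap U|=t$ and that $G-(X\cap U)$ is $K_{a,b}$-free; the key step (Claim~2) is that if $|X|\ge t+1$ one can greedily grow $t+1$ disjoint $K_{a,b}$'s, each rooted at a high-degree vertex, because the neighbourhood of such a vertex cannot be $K_{a-1,b}$-free (otherwise Alon--Shikhelman applied with parameter $a-1$ would cap its $\mathcal{H}$-degree below $n^{\beta}$).

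In short: your packing argument is more elementary and suffices for the stated asymptotic; the paper's high-degree-vertex argument is what is needed if one wants the exact $\binom{t}{r-s}$ on the subleading terms and, in particular, to prove the cycle analogue in the form the paper states it.
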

We note that the bounds established in Theorem~\ref{Kr(t+1)Kab} are asymptotically tight, with the gap between upper and lower bounds being at most a multiplicative constant depending on the parameters $a$, $b$, and $r$. Our second result gives more precise upper and lower bounds for $\mathrm{ex}(n,K_r,(t+1)C_{2k})$.   
\begin{thm}\label{main-theorem}
    For any $t\geq r \geq 3$, and any $(t+1)C_{2k}$-free graph $G$ on $n$ vertex where $n$ is large enough, we have that $$k_r(G)\leq \sum_{i=0}^{r}\binom{t}{r-i}\mathrm{ex}(n-t,K_i,C_{2k}). $$ %Moreover, the equality holds if and only if $G=K_{t}\vee H$ for some $H\in \mathcal{F}(n-t,C_{2k}),$$
    and $$k_r(G)\geq \binom{t}{r}+\binom{t}{r-1} \left(n-t\right)+\max_{2\leq s\leq r}\{\binom{t}{r-s} \mathrm{ex}(n-t,K_s,C_{2k})\}.$$
\end{thm}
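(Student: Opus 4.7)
The plan is to handle the upper and lower bounds separately. For the upper bound, my approach is to locate a set $T\subseteq V(G)$ of exactly $t$ vertices such that $G-T$ is $C_{2k}$-free, and then decompose the count of $K_r$'s in $G$ according to $j=|V(K_r)\cap T|$. Indeed, for any such $T$, each $K_r$ in $G$ restricts to a $K_j$ inside $G[T]$ and a $K_{r-j}$ inside $G-T$ (with full bipartite joining between the two sides), whence
\[
k_r(G) \;\leq\; \sum_{j=0}^{r} k_j(G[T])\, k_{r-j}(G-T) \;\leq\; \sum_{j=0}^{r}\binom{t}{j}\,\mathrm{ex}(n-t,K_{r-j},C_{2k}).
\]
The substitution $i=r-j$ rewrites this as $\sum_{i=0}^{r}\binom{t}{r-i}\,\mathrm{ex}(n-t,K_i,C_{2k})$, exactly the claimed bound.

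The main obstacle is producing a cover $T$ of size \emph{exactly} $t$. The naive choice from a maximum vertex-disjoint family $\mathcal{M}$ of $C_{2k}$'s in $G$ has $|V(\mathcal{M})|=2kt'$ with $t'\leq t$, which inflates the coefficients by a factor of roughly $(2k)^{r-i}$ and does not suffice. My plan is to combine two ingredients. First, a stability-type analysis: when $n$ is large and $k_r(G)$ is close to the claimed upper bound, I expect $G$ to be essentially of the form $K_t\vee G_0$ with $G_0$ an $(n-t)$-vertex $C_{2k}$-free graph, in which case $T$ is the $K_t$-side (or a carefully cleaned substitute extracted from the vertex set of $\mathcal{M}$). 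Second, for graphs whose $K_r$-count is far from this extremal value, the naive disjoint-family bound — even though it uses $2kt'$ vertices — still sits below the desired right-hand side for $n$ large, because the dominant term $\binom{t}{r-s}\,\mathrm{ex}(n-t,K_s,C_{2k})$ grows with $n$. Gluing these two cases together is where the hypothesis "$n$ sufficiently large" enters, and is the main technical step to execute carefully.

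For the lower bound I would use the explicit construction $G^{*}=K_t\vee G_0$, where $G_0$ is an $(n-t)$-vertex $C_{2k}$-free graph attaining $\mathrm{ex}(n-t,K_s,C_{2k})$ for the value $s\in\{2,\ldots,r\}$ that maximises $\binom{t}{r-s}\,\mathrm{ex}(n-t,K_s,C_{2k})$. First, $G^{*}$ is $(t+1)C_{2k}$-free: among any $t+1$ pairwise vertex-disjoint copies of $C_{2k}$, at least one must avoid all $t$ vertices of the $K_t$-side and hence lie entirely inside $G_0$, contradicting the $C_{2k}$-freeness of $G_0$. Second, partitioning each $K_r$ in $G^{*}$ by how many of its vertices lie in $V(K_t)$ versus $V(G_0)$ gives
\[
k_r(G^{*}) \;=\; \sum_{j=0}^{r}\binom{t}{r-j}\,k_j(G_0) \;\geq\; \binom{t}{r} + \binom{t}{r-1}(n-t) + \binom{t}{r-s}\,\mathrm{ex}(n-t,K_s,C_{2k}),
\]
using $k_0(G_0)=1$, $k_1(G_0)=n-t$, and the choice of $s$. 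Taking the maximum over $s\in\{2,\ldots,r\}$ yields the claimed lower bound.
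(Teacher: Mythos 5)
Your lower bound is correct and coincides exactly with the paper's construction: $K_t\vee G_0$ with $G_0$ a $C_{2k}$-free graph attaining the best term, verified by pigeonhole and the join decomposition. Your reduction of the upper bound to finding a set $T$ of exactly $t$ vertices with $G-T$ being $C_{2k}$-free is also the right target --- the paper's proof ends with precisely that decomposition. The gap is that you never actually produce $T$. Your two-case ``gluing'' is circular: in your second case (``$k_r(G)$ far from the extremal value'') there is nothing to prove, since any graph whose $K_r$-count is below the claimed bound satisfies the theorem vacuously; and your supporting claim that the naive $2kt'$-vertex cover bound ``sits below the desired right-hand side for $n$ large'' is unjustified and in general false, because that bound inflates the dominant coefficient from $\binom{t}{r-s}$ to roughly $\binom{2kt'}{r-s}$ while both sides grow at the same rate in $n$, so largeness of $n$ does not help. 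Hence all the content sits in your first case, the ``stability-type analysis,'' for which you offer no mechanism beyond saying you ``expect'' $G$ to be essentially $K_t\vee G_0$ and that $T$ is ``a carefully cleaned substitute'' extracted from $V(\mathcal{M})$. That expectation is essentially equivalent to the theorem itself, so the proposal defers rather than closes the argument.

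The mechanism the paper uses, and which your proposal is missing, is a degree classification in the auxiliary hypergraph together with a greedy extraction of disjoint cycles. Assume for contradiction that $k_r(G)$ exceeds the bound, arrange (by adding edges) that a maximum disjoint family has exactly $t$ copies with vertex set $U$, $|U|=2kt$, let $\mathcal{H}$ be the $r$-uniform hypergraph whose edges are the $K_r$'s of $G$, and let $X=\{v: d_{\mathcal{H}}(v)\geq n^{1+\epsilon}/(2|U|)\}$ with $0<\epsilon<\frac{1}{2k-1}$. Then: (i) $|X|\leq t$, because if $x_1,\dots,x_{t+1}$ were all high-degree one could greedily find $t+1$ pairwise disjoint copies of $C_{2k}$, one through each $x_i$ --- the point being that if $G[N(x_{i+1})]$ minus the previously used vertices contained no path $P_{2k-1}$ (hence no $C_{2k}$ through $x_{i+1}$ avoiding them), then Luo's theorem $\mathrm{ex}(m,K_s,P_{2k-1})=O(m)$ would force $d_{\mathcal{H}}(x_{i+1})=O(n)$, contradicting $d_{\mathcal{H}}(x_{i+1})=\Omega(n^{1+\epsilon})$; (ii) $|X\cap U|=t$, since otherwise summing hypergraph degrees over $U\setminus X$ contributes at most $n^{1+\epsilon}/2$, which is swallowed by the term $\binom{t}{r-2}\mathrm{ex}(n-t,K_2,C_{2k})=\Omega(n^{1+1/(2k-1)})$ --- this is exactly where the probabilistic lower bound $\mathrm{ex}(m,C_{2k})\geq c_k m^{1+1/(2k-1)}$ and the choice $\epsilon<\frac{1}{2k-1}$ enter --- making $k_r(G)$ too small; (iii) $G-(X\cap U)$ is $C_{2k}$-free, by rerunning the greedy argument of (i). This yields your desired set $T=X\cap U$ of size exactly $t$ and finishes the proof by your own decomposition. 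Without some substitute for this argument --- in particular without Luo's theorem and the $\Omega(n^{1+1/(2k-1)})$ lower bound, neither of which your proposal invokes --- the upper bound does not follow.
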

In Section~\ref{Preliminaries and Lemmas} we show some Lemmas that will be used in the proof of the main theorems and in Section~\ref{Proof of Main Theorems} we give the proofs of Theorem~\ref{Kr(t+1)Kab} and Theorem~\ref{main-theorem}.

%\begin{thm}\label{Kr(t+1)Kab}
%    For any $t\geq r\geq 3$, $a\geq r-1$ and $b\geq (a-1)!+1$, and any $(t+1)K_{a,b}$-free graph $G$ on $n$ vertex where $n$ is large enough, we have that $$k_r(G)\leq \sum_{i=0}^{r}\binom{t}{r-i}\mathrm{ex}(n-t,K_i,K_{a,b}).$$ 
%\end{thm}

\section{Preliminaries and Lemmas}\label{Preliminaries and Lemmas}

The result of Alon and Shikhelman \cite{AS16} established upper and lower bounds for the number of cliques in $K_{a,b}$-free graphs. In particular, they determined the order of magnitude of $\mathrm{ex}(n,K_r,K_{a,b})$ when $r\geq 2$, $a\geq 2r-2$ and $b \geq (a-1)!+1$.
\begin{thm}[Alon and Shikhelman \cite{AS16}] \label{Kr Upper Bound in Kab-free}
    For any fixed $r \geq 2$ and $b \geq a \geq r-1$,$$ \mathrm{ex}\left(n, K_r, K_{a, b}\right) \leq\left(\frac{1}{r!}+o(1)\right)(b-1)^{\frac{r(r-1)}{2a}} n^{r-\frac{r(r-1)}{2a}}.$$
    Moreover, if $b\geq (a-1)!+1$, and $a\geq 2(r-1)$ then $$\mathrm{ex}\left(n, K_r, K_{a, b}\right)\geq \left(\frac{1}{r!}+o(1)\right)n^{r-\frac{r(r-1)}{2a}}.$$
\end{thm}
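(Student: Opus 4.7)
The plan for the upper bound is to induct on $r$, with the Kővári–Sós–Turán theorem as the base case $r = 2$. The key structural input is the observation that when $G$ is $K_{a,b}$-free, for every $v\in V(G)$ the induced subgraph $G[N(v)]$ is $K_{a-1,b}$-free: any copy of $K_{a-1,b}$ in $N(v)$, with $v$ added to the side of size $a-1$, would yield $K_{a,b}\subseteq G$. Thus I can simultaneously decrease $r$ and $a$ at each step, and the hypothesis $a\geq r-1$ guarantees that the chain $(r,a),(r-1,a-1),\dots,(2,a-r+2)$ stays within the regime where Kővári–Sós–Turán applies.

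At the inductive step, I would use $r\,k_r(G)=\sum_v k_{r-1}(G[N(v)])$ and the inductive hypothesis to get
\[
r\,k_r(G)\leq \left(\tfrac{1}{(r-1)!}+o(1)\right)(b-1)^{\frac{(r-1)(r-2)}{2(a-1)}}\sum_v d(v)^{p},\qquad p:=(r-1)-\tfrac{(r-1)(r-2)}{2(a-1)}.
\]
To bound $\sum_v d(v)^p$, I would use the standard Kővári–Sós–Turán moment estimate obtained by double-counting pairs $(v,A)$ with $A$ an $a$-subset of $N(v)$: the $K_{a,b}$-freeness of $G$ gives $\sum_v\binom{d(v)}{a}\leq (b-1)\binom{n}{a}$, hence $\sum_v d(v)^{a}\leq (1+o(1))(b-1)n^{a}$. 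Since $p\leq r-1\leq a$, Hölder's inequality with conjugate pair $(a/p,\,a/(a-p))$ yields
\[
\sum_v d(v)^p \leq \Bigl(\sum_v d(v)^a\Bigr)^{p/a}n^{\,1-p/a}\leq (1+o(1))(b-1)^{p/a}\,n^{\,1+p(1-1/a)}.
\]
A direct algebraic simplification shows $1+p(1-1/a)=r-\frac{r(r-1)}{2a}$ and $\frac{(r-1)(r-2)}{2(a-1)}+\frac{p}{a}=\frac{r(r-1)}{2a}$, so dividing by $r$ converts $\frac{1}{(r-1)!}$ into $\frac{1}{r!}$ and closes the induction with the correct exponents in both $n$ and $b-1$.

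For the lower bound, under $b\geq(a-1)!+1$ and $a\geq 2(r-1)$, the plan is to invoke an explicit extremal construction: the projective norm graphs of Alon–Rónyai–Szabó (refining the earlier Kollár–Rónyai–Szabó construction) produce on $n$ vertices a $K_{a,(a-1)!+1}$-free graph that is almost-regular of degree $\Theta(n^{1-1/a})$ and whose algebraic structure is rigid enough that the number of $r$-cliques matches the ``quasi-random'' prediction $\binom{n}{r}p^{\binom{r}{2}}$ with $p=\Theta(n^{-1/a})$; this delivers the claimed $\bigl(\frac{1}{r!}+o(1)\bigr)n^{r-r(r-1)/(2a)}$ lower bound.

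I expect the main obstacle to be the lower bound: pinning down the exact leading constant $\tfrac{1}{r!}$ requires a careful count of $r$-tuples in the norm graph satisfying a system of norm-type equations, which is not entirely mechanical, and the assumption $a\geq 2(r-1)$ enters here by guaranteeing that the relevant algebraic variety has its generic dimension. For the upper bound, the only subtlety is verifying that $p\leq a$ throughout the induction (which reduces to $a\geq r-1$) so that the Hölder step is valid, and tracking that the accumulated $(1+o(1))$ factors from finitely many inductive applications remain $o(1)$.
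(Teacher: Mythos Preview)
The paper does not prove this theorem: it is quoted as a result of Alon and Shikhelman~\cite{AS16} and used as a black box in the proofs of Theorems~\ref{Kr(t+1)Kab} and~\ref{main-theorem}. The only elaboration the paper offers is the remark following the statement, which identifies the lower-bound construction as the projective norm graphs $H(q,a)$ of Alon--R\'onyai--Szab\'o and records their clique counts; this matches exactly what you propose for the lower bound.

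Your upper-bound sketch is in fact the standard Alon--Shikhelman argument: induct on $r$, pass to neighborhoods (which are $K_{a-1,b}$-free), use the K\H{o}v\'ari--S\'os--Tur\'an moment inequality $\sum_v\binom{d(v)}{a}\le(b-1)\binom{n}{a}$, and close with H\"older. Your exponent bookkeeping is correct. One technical point you would need to address in a full proof is that the inductive hypothesis carries an $o(1)$ term as the number of vertices of the ambient graph tends to infinity, whereas you are applying it to $G[N(v)]$, which may have few vertices; the usual fix is either to prove a clean non-asymptotic inequality by the same induction, or to show that low-degree vertices contribute negligibly. This is routine, but your writeup glosses over it.
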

We remark that the lower bound construction in Theorem~\ref{Kr Upper Bound in Kab-free} comes from projective norm-graphs, denoted by $H(q,a)$, which were introduced in~\cite{ALON1999280}. Let $n = (1 + o(1))q^a$ and $d = (1 + o(1))q^{a-1}$. The graph $H(q,a)$ is $d$-regular and $K_{a,(a-1)!+1}$-free of $n$ vertices. Moreover, for any $2 \leq s \leq a/2 + 1$, it satisfies $$k_s(H(q,a)) \geq \left(\frac{1}{s!} + o(1)\right)n^{s - \frac{s(s-1)}{2a}}.$$ This immediately yields the following corollary:
\begin{cor}\label{Cor Lower Bound Kab-free}
    Let $r\geq 2$, $a\geq 2r-2$ and $b\geq (a-1)!+1$, then for each $n$ large enough, there exists a $K_{a,b}$-free $n$-vertex graph $G$ such that for each $1\leq s\leq r$, $$k_s(G)\geq\left(\frac{1}{s!}+o(1)\right)n^{s-\frac{s(s-1)}{2a}}.$$
\end{cor}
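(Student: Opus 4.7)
The plan is to realize $G$ as a projective norm graph $H(q,a)$ together with a small number of isolated vertices so that the total vertex count is exactly $n$. Since $b\geq (a-1)!+1$ and $H(q,a)$ is already $K_{a,(a-1)!+1}$-free, the result will remain $K_{a,b}$-free (adding isolated vertices cannot create a new copy of $K_{a,b}$). The condition $a\geq 2r-2$ is exactly what we need to apply the clique-count estimate $k_s(H(q,a))\geq (1/s!+o(1))m^{s-s(s-1)/(2a)}$ in the range $2\leq s\leq r$, since $r\leq a/2+1$.

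First I would invoke Bertrand-type density results for prime powers to choose a prime power $q$ with $(1+o(1))q^{a}=n$; concretely, pick $q$ maximal subject to the property that the number of vertices $m=m(q)$ of $H(q,a)$ satisfies $m\leq n$. Because consecutive primes (hence prime powers) differ by $o(q)$, the gap $n-m$ is $o(q^{a})=o(n)$. Let $G$ be the disjoint union of $H(q,a)$ with $n-m$ isolated vertices. Then $|V(G)|=n$ and $G$ is $K_{a,b}$-free.

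Next I would verify the clique-count bound for each $1\leq s\leq r$. The case $s=1$ is immediate since $k_1(G)=n=(1+o(1))n$. For $2\leq s\leq r\leq a/2+1$, the isolated vertices contribute no $s$-cliques, so
\begin{equation*}
k_s(G)=k_s(H(q,a))\geq \left(\frac{1}{s!}+o(1)\right)m^{\,s-\frac{s(s-1)}{2a}}.
\end{equation*}
Substituting $m=n-o(n)$ and absorbing the lower-order multiplicative correction $(m/n)^{s-s(s-1)/(2a)}=1-o(1)$ into the $o(1)$ term yields
\begin{equation*}
k_s(G)\geq \left(\frac{1}{s!}+o(1)\right)n^{\,s-\frac{s(s-1)}{2a}},
\end{equation*}
as required.

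The main (minor) obstacle is simply the vertex-count alignment: ensuring that a suitable prime power $q$ exists with $q^{a}$ close enough to $n$ that the asymptotic $1/s!+o(1)$ is preserved after padding. This is handled by any sub-linear bound on prime gaps. Everything else is an invocation of the stated properties of the projective norm graph $H(q,a)$ together with the elementary observation that appending isolated vertices neither destroys $K_{a,b}$-freeness nor alters the count of $s$-cliques for $s\geq 2$.
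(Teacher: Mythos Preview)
Your proposal is correct and follows essentially the same approach as the paper: the paper simply observes that the projective norm graph $H(q,a)$ is $K_{a,(a-1)!+1}$-free with $k_s(H(q,a))\geq (1/s!+o(1))n^{s-s(s-1)/(2a)}$ for $2\leq s\leq a/2+1$, and states that the corollary follows immediately. You have merely spelled out the routine details (prime-power density, padding with isolated vertices, the observation $a\geq 2r-2\Rightarrow r\leq a/2+1$) that the paper leaves implicit.
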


Let $P_{\ell}$ denote the path with $\ell$ vertices. The following lemma can be viewed as an extension of the celebrated Erd\H{o}s-Gallai theorem. It establishes that in graphs excluding $P_{\ell}$, the maximum number of cliques of each possible size grows linearly with $n$. This result provides strong control over the number of cliques contained in the neighborhood of each vertex in $C_{2k}$-free graphs. %The following result is needed.
\begin{thm}[Luo \cite{L18}]\label{K_r is linear in Pt-free}
    Let $r,t\geq 2$ be positive integers. There exists a constant $C_{r,t}$ such that for each $n$, we have $\mathrm{ex}(n,K_r,P_t)\leq C_{r,t}n$. 
\end{thm}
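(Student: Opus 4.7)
My plan is to follow the block-decomposition strategy. Since any clique lies in a single connected component, and the number of components is at most $n$, I would first reduce to the case that $G$ is connected. I would then use the fact that every clique on at least $2$ vertices is contained in a single \emph{block} of $G$ (a maximal $2$-connected subgraph, or a bridge edge), since no clique can straddle a cut vertex. Letting $\mathcal{B}$ denote the block set, we obtain $k_r(G)=\sum_{B\in\mathcal{B}} k_r(B)$ for $r\geq 2$, and because the blocks form a tree with at most $n-1$ nodes and $\sum_{B\in\mathcal{B}}|V(B)|\leq 2n$, a linear bound $k_r(B)\leq c_{r,t}|V(B)|$ per block is enough to finish.

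The core task is thus to bound $k_r(B)$ in a $2$-connected $P_t$-free graph $B$ on $m$ vertices. For this I would invoke Kopylov's structural theorem for $2$-connected graphs with no long path: either $m\leq t-1$, in which case $k_r(B)\leq \binom{t-1}{r}$ trivially, or $B$ admits a ``core'' $S\subseteq V(B)$ of size at most $\lfloor (t-1)/2\rfloor$ such that every vertex outside $S$ has all its neighbors in $S$. In the second case, each $K_r$ in $B$ either lies entirely inside $B[S]$, contributing at most $\binom{|S|}{r}$ copies, or contains some vertex $v\notin S$; in the latter subcase the remaining $r-1$ vertices of the clique lie in $N_B(v)\subseteq S$, so enumerating $v$ first gives at most $m\cdot\binom{|S|}{r-1}$ further copies. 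In all cases $k_r(B)=O_{r,t}(m)$.

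Summing over blocks then yields $k_r(G)\leq c_{r,t}\sum_{B\in\mathcal{B}}|V(B)|\leq 2c_{r,t}\,n$, and one sets $C_{r,t}=2c_{r,t}$. The main obstacle is justifying the Kopylov-style core/periphery dichotomy, which itself rests on the classical rotation-extension argument for longest paths in $2$-connected graphs together with a careful case analysis of how a hypothetical extension of a longest path can fail; once this structural lemma is in hand, the block decomposition and summation are essentially mechanical bookkeeping.
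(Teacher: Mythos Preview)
The paper does not prove this theorem; it is quoted from Luo as a known input, so there is no proof here to compare against and the question is simply whether your argument stands on its own.

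The block-decomposition reduction is fine, but the ``Kopylov-style core/periphery dichotomy'' you invoke for $2$-connected blocks is false as stated. Take $t=8$ and let $B$ be the graph on eight vertices obtained from a clique $K_5$ on $\{1,2,3,4,5\}$ by adding three further vertices $a,b,c$, each adjacent only to $1$ and $2$. This graph is $2$-connected and $P_8$-free (the longest path has seven vertices, for instance $a,1,3,4,5,2,b$), and it has $8>t-1=7$ vertices. Yet any vertex cover of $B$ must contain at least four of the five clique vertices, so there is no set $S$ with $|S|\le\lfloor(t-1)/2\rfloor=3$ such that $V(B)\setminus S$ is independent with all its neighbours in $S$. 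Kopylov's theorem concerns the extremal \emph{edge count} for $2$-connected graphs of bounded circumference; the extremal graphs do have a core/periphery shape, but arbitrary $2$-connected $P_t$-free graphs need not.

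A much shorter route avoids the obstacle entirely. Every subgraph of a $P_t$-free graph is again $P_t$-free, so the Erd\H{o}s--Gallai bound $e(H)\le\frac{t-2}{2}\,|V(H)|$ holds for every $H\subseteq G$, whence $G$ is $(t-2)$-degenerate. Ordering the vertices so that each has at most $t-2$ earlier neighbours and charging each $r$-clique to its last vertex in this order gives $k_r(G)\le\binom{t-2}{r-1}\,n$, so one may take $C_{r,t}=\binom{t-2}{r-1}$.
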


Although we cannot determine the exact value of $\mathrm{ex}(n,C_{2k})$, by employing the Erd\H{o}s-R\'enyi first moment method, we can establish the following result, which is sufficient for us to prove Theorem~\ref{main-theorem}. We omit the proof of the lemma for simplicity.
\begin{lem}\label{Lower bound edge C2k-free}
    For each positive integer $k\geq 2$ there exists a constant $c_{k}$ such that for any $n$ large enough, there is a $C_{2k}$-free graph of order $n$ and size $m\geq c_kn^{1+\frac{1}{2k-1}}$.
\end{lem}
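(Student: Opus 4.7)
The plan is to apply the Erd\H{o}s-R\'enyi first-moment (deletion) argument, exactly as the authors hint. I would consider the random graph $G \sim G(n,p)$ with edge probability $p := c_0\, n^{-(2k-2)/(2k-1)}$, where $c_0 = c_0(k) > 0$ is a small constant to be fixed at the end. The exponent is chosen precisely so that the expected number of edges and the expected number of $C_{2k}$-copies are of the same order, namely $\Theta(n^{1+1/(2k-1)})$, after which a sufficiently small $c_0$ makes the edge count dominate.

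The first computation is $\mathbb{E}[e(G)] = \binom{n}{2} p = (1+o(1))\, \tfrac{c_0}{2}\, n^{1+1/(2k-1)}$. For the cycles, since a $2k$-cycle has $4k$ automorphisms, the number of potential copies of $C_{2k}$ on $[n]$ is $\binom{n}{2k}(2k)!/(4k) \leq n^{2k}/(4k)$, and each appears in $G$ with probability $p^{2k}$, so
$$\mathbb{E}\bigl[\#C_{2k}(G)\bigr] \leq \frac{n^{2k}}{4k}\, p^{2k} = \frac{c_0^{2k}}{4k}\, n^{1+1/(2k-1)}.$$
Setting $X := e(G) - \#C_{2k}(G)$, linearity of expectation yields
$$\mathbb{E}[X] \geq \left(\frac{c_0}{2} - \frac{c_0^{2k}}{4k} - o(1)\right) n^{1+1/(2k-1)}.$$
I would now fix $c_0$ small enough (depending only on $k$) that the bracketed quantity exceeds a positive constant $c_k$ for all $n$ sufficiently large; then some outcome $G$ satisfies $X \geq c_k\, n^{1+1/(2k-1)}$.

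To finish, I would delete one edge from each $C_{2k}$-copy in this fixed $G$. The resulting subgraph has $n$ vertices, contains no $C_{2k}$, and retains at least $c_k\, n^{1+1/(2k-1)}$ edges, proving the lemma. There is essentially no obstacle here: the only mild point of care is the automorphism count when bounding $\mathbb{E}[\#C_{2k}(G)]$, and otherwise this is a textbook first-moment/deletion argument, which is presumably why the authors chose to omit the proof.
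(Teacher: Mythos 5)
Your argument is correct and is exactly the Erd\H{o}s--R\'enyi first-moment/deletion argument that the paper invokes (and omits): the choice $p=c_0 n^{-(2k-2)/(2k-1)}$ balances the expected edge and cycle counts at order $n^{1+1/(2k-1)}$, the automorphism count $4k$ is right, and deleting one edge per copy of $C_{2k}$ from an outcome with $e(G)-\#C_{2k}(G)\geq \mathbb{E}[X]$ yields the desired $C_{2k}$-free graph. No gaps.
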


\section{Proof of Main Theorems}\label{Proof of Main Theorems}
%\section{Proof of Theorem \ref{main-theorem}}\label{sec:proof of main}
We begin by proving Theorem~\ref{Kr(t+1)Kab}. The lower bound is obtained according to Corollary~\ref{Cor Lower Bound Kab-free}. The upper bound is proved by contradiction. Specifically, we assume that if a maximal $(t+1)K_{a,b}$-free graph contains more $K_r$ copies than our established upper bound, then such a graph must contain exactly $t$ vertices that appear in sufficiently many $K_r$ copies, thereby reaching a contradiction.

\begin{proof}[Proof of Theorem~\ref{Kr(t+1)Kab}]
    We first present the construction for the lower bound. Let $a \geq 2r-2$. By Corollary~\ref{Cor Lower Bound Kab-free}, there exists a $K_{a,b}$-free graph $H$ on $n-t$ vertices satisfying
$$
k_s(H) \geq \left(\frac{1}{s!} + o(1)\right)(n-t)^{s - \frac{s(s-1)}{2a}}
$$
for each $1\leq s\leq r$. Let $G = H \vee K_t$. Since $H$ is $K_{a,b}$-free, every copy of $K_{a,b}$ in $G$ must contain at least one vertex from $K_t$. Consequently, $G$ is $(t+1)K_{a,b}$-free. Moreover, $$\begin{aligned}
    k_r(G)&\geq \binom{t}{r}+\binom{t}{r-1}(n-t)+\sum_{s=2}^{r}\binom{t}{r-s}k_s(H)\\
    &\geq \sum_{s=0}^{r}\binom{t}{r-s}\left(\frac{1}{s!} + o(1)\right)(n-t)^{s - \frac{s(s-1)}{2a}}.
\end{aligned}$$
Now we start to prove the upper bound. We assume by contradiction that there is a $(t+1)K_{a,b}$-free graph $G$ of order $n$ with $$k_r(G)>\sum_{i=0}^{r}\binom{t}{r-i}\mathrm{ex}(n-t,K_i,K_{a,b}).$$ We also assume that $G$ has $t$ vertex-disjoint copies of $K_{a,b}$. Otherwise we can add as many edges as possible to $G$ while ensuring that $G$ contains no $t+1$ vertex-disjoint copies of $K_{a,b}$, until $G$ becomes a graph containing $t$ vertex-disjoint copies of $K_{a,b}$. Let $U\subseteq V(G)$ be the union of $(a+b)t$ vertices of the $t$ vertex-disjoint copies of $K_{a,b}$ in $G$ and $W=V(G)\setminus U$. Then $G[W]$ is $K_{a,b}$-free, since $G$ is $(t+1)K_{a,b}$-free. Define an auxiliary $r$-uniform hypergraph $\mathcal{H}$ as follows: $V(\mathcal{H})=V(G)$ and $x_1x_2\cdots x_r$ is an edge in $\mathcal{H}$ if and only if they induce a copy of $K_r$ in $G$. Define $$X=\{v\in V(\mathcal{H}):d_{\mathcal{H}}(v)\geq \frac{n^{\beta}}{2|U|}\},$$ where $$r-1-\frac{(r-1)(r-2)}{2(a-1)}<\beta<r-1-\frac{(r-1)(r-2)}{2a}$$ is determined by Theorem~\ref{Kr Upper Bound in Kab-free}. %Define $$X=\{v\in V(\mathcal{H}):d_{\mathcal{H}}(v)\geq \frac{n^{1+\epsilon}}{4kt}\},$$ where $0<\epsilon<\frac{1}{2k-1}$. 
    Therefore, $$n^{\beta}<\mathrm{ex}(n,K_{r-1},K_{a,b})=\Theta\left(n^{r-1-\frac{(r-1)(r-2)}{2a}}\right)$$ when $n$ is sufficiently large.

    Next, we provide some characterizations of the set $X$.
    \begin{claim}\label{X is not empty Kab}
        $X\neq \emptyset$.
    \end{claim}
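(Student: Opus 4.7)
The plan is to argue by contradiction. Suppose $X=\emptyset$, so every vertex $v\in V(\mathcal{H})=V(G)$ satisfies $d_{\mathcal{H}}(v)<\frac{n^{\beta}}{2|U|}$. I will derive an upper bound on $k_r(G)$ that contradicts the standing contradiction hypothesis $k_r(G)>\sum_{i=0}^{r}\binom{t}{r-i}\mathrm{ex}(n-t,K_i,K_{a,b})$ on which the whole proof rests.

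The argument splits the copies of $K_r$ in $G$ into those lying entirely in $W$ and those meeting $U$. For the first class, since $G[W]$ is $K_{a,b}$-free (as already established) and $|W|=n-(a+b)t\leq n-t$, Theorem~\ref{Kr Upper Bound in Kab-free} together with the monotonicity of $\mathrm{ex}(\cdot,K_r,K_{a,b})$ in the vertex count yields $k_r(G[W])\leq \mathrm{ex}(n-t,K_r,K_{a,b})$. For the second class, each such copy contains at least one vertex of $U$ and is therefore counted in $\sum_{v\in U}d_{\mathcal{H}}(v)<|U|\cdot\frac{n^{\beta}}{2|U|}=\frac{n^{\beta}}{2}$. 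Summing the two bounds yields $k_r(G)<\mathrm{ex}(n-t,K_r,K_{a,b})+\frac{n^{\beta}}{2}$.

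Next I compare with the contradiction hypothesis. Retaining only the $i=r$ and $i=r-1$ terms of its right-hand side gives $\sum_{i=0}^{r}\binom{t}{r-i}\mathrm{ex}(n-t,K_i,K_{a,b})\geq \mathrm{ex}(n-t,K_r,K_{a,b})+\binom{t}{r-1}\mathrm{ex}(n-t,K_{r-1},K_{a,b})$. Combining this with the upper bound above forces $\frac{n^{\beta}}{2}>\binom{t}{r-1}\mathrm{ex}(n-t,K_{r-1},K_{a,b})$. But the parameter conditions $a\geq 2(r-1)$ and $b\geq (a-1)!+1$ allow Corollary~\ref{Cor Lower Bound Kab-free} to be applied at $s=r-1$, producing $\mathrm{ex}(n-t,K_{r-1},K_{a,b})=\Omega\!\left((n-t)^{r-1-\frac{(r-1)(r-2)}{2a}}\right)$. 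Since $\beta<r-1-\frac{(r-1)(r-2)}{2a}$ by the very definition of $\beta$, we have $n^{\beta}=o\!\left((n-t)^{r-1-\frac{(r-1)(r-2)}{2a}}\right)$, contradicting the previous strict inequality for all sufficiently large $n$. Hence $X\neq\emptyset$.

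The main obstacle will be to verify that the saving obtained from the assumption $X=\emptyset$ (namely $\frac{n^{\beta}}{2}$) is strictly smaller than the next-order contribution in the hypothesis (namely the $i=r-1$ term). This works only because $\beta$ was placed strictly below the exponent $r-1-\frac{(r-1)(r-2)}{2a}$ that governs $\mathrm{ex}(n,K_{r-1},K_{a,b})$; the extra room between $n^{\beta}$ and $\binom{t}{r-1}\mathrm{ex}(n-t,K_{r-1},K_{a,b})$ has to absorb both the multiplicative constant $\binom{t}{r-1}$ and the slight discrepancy between $|W|=n-(a+b)t$ and $n-t$, but since both differences are of lower order, the contradiction goes through.
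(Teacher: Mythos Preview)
Your approach is essentially identical to the paper's: split the $r$-cliques according to whether they meet $U$, bound the two parts by $\tfrac{n^{\beta}}{2}$ and $\mathrm{ex}(n-t,K_r,K_{a,b})$ respectively, and compare against the $i=r$ and $i=r-1$ terms of the standing hypothesis using $\beta<r-1-\tfrac{(r-1)(r-2)}{2a}$.

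There is one slip to fix. When you retain the $i=r-1$ term, its coefficient is $\binom{t}{r-(r-1)}=\binom{t}{1}=t$, not $\binom{t}{r-1}$. Since $\binom{t}{r-1}$ can exceed $t$ (e.g.\ $t=5$, $r=3$ gives $10>5$) and the remaining terms $i\leq r-2$ are only $O(n^{r-2})$, the displayed inequality
\[
\sum_{i=0}^{r}\binom{t}{r-i}\mathrm{ex}(n-t,K_i,K_{a,b})\geq \mathrm{ex}(n-t,K_r,K_{a,b})+\binom{t}{r-1}\,\mathrm{ex}(n-t,K_{r-1},K_{a,b})
\]
can actually fail for large $n$. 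Replacing $\binom{t}{r-1}$ by $t$ (or even by $1$) repairs this instantly, and the rest of your argument goes through unchanged.
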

    \begin{proof}[Proof of Claim~\ref{X is not empty Kab}]
        Suppose that for every $v\in V(\mathcal{H})$, we have $d_{\mathcal{H}}(v)<\frac{n^{\beta}}{2|U|}$ and thus for $n$ large enough, we obtain that  $$\begin{aligned}
            k_r(G)&\leq \sum_{v\in U}d_{\mathcal{H}}(v)+k_r(G[W]) \\ &\leq |U|\cdot \frac{n^{\beta}}{2|U|}+\mathrm{ex}(n-|U|,K_r,K_{a,b})\\ &\leq \frac{1}{2}n^{\beta}+\mathrm{ex}(n-(a+b)t,K_{r},K_{a,b}) \\&< \mathrm{ex}(n-t,K_{r-1},K_{a,b})+\mathrm{ex}(n-t,K_{r},K_{a,b})\\ &<\sum_{i=0}^{r}\binom{t}{r-i}\mathrm{ex}(n-t,K_i,K_{a,b}),
        \end{aligned}$$
        which contradicts to our assumption.
    \end{proof}
    \begin{claim}\label{X small than t+1 Kab}
        $|X|\leq t$.
    \end{claim}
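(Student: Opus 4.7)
The plan is to argue by contradiction: assume $|X| \ge t+1$ and construct $t+1$ pairwise vertex-disjoint copies of $K_{a,b}$ in $G$, contradicting the hypothesis that $G$ is $(t+1)K_{a,b}$-free. These $t+1$ copies will be obtained greedily, each anchored at a distinct vertex of $X$.

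The key substep is to show that every $v \in X$ lies in many copies of $K_{a,b}$ and, more usefully, that one can produce such a copy avoiding any prescribed constant-size set $S \subseteq V(G)$. By definition $d_{\mathcal{H}}(v) = k_{r-1}(G[N(v)]) \ge \frac{n^{\beta}}{2|U|}$. The left-hand inequality on $\beta$, namely $\beta > r-1 - \frac{(r-1)(r-2)}{2(a-1)}$, combined with Theorem~\ref{Kr Upper Bound in Kab-free} applied to $K_{a-1,b}$-free graphs (the hypotheses $a-1 \ge 2(r-2)$ and $b \ge (a-1)!+1 \ge (a-2)!+1$ are satisfied), gives $n^{\beta} \gg \mathrm{ex}(n, K_{r-1}, K_{a-1,b})$ for $n$ large. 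Hence for any such $S$ the subgraph $G[N(v) \setminus S]$ still carries more than $\mathrm{ex}(|N(v)\setminus S|, K_{r-1}, K_{a-1,b})$ copies of $K_{r-1}$ and therefore contains a copy of $K_{a-1,b}$; appending $v$ to the $(a-1)$-side produces a copy of $K_{a,b}$ in $G$ that contains $v$ and is disjoint from $S$. Using this, I would pick distinct vertices $v_1,\ldots,v_{t+1}\in X$ and iteratively build pairwise disjoint $K_{a,b}$-copies $F_1,\ldots,F_{t+1}$ with $v_i\in V(F_i)$: at step $i+1$ apply the substep to $v_{i+1}$ with $S=(V(F_1)\cup\cdots\cup V(F_i)\cup\{v_1,\ldots,v_{t+1}\})\setminus\{v_{i+1}\}$, which has size at most $t(a+b)+t$, a constant independent of $n$. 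The resulting $F_1,\ldots,F_{t+1}$ realize $(t+1)K_{a,b}\subseteq G$, the desired contradiction.

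The main technical hurdle is the error estimate underlying the key substep: one must verify that removing a constant-size set $S$ from $N(v)$ destroys only $o(n^{\beta})$ copies of $K_{r-1}$, so that the residual count still exceeds $\mathrm{ex}(n, K_{r-1}, K_{a-1,b})$. The crude bound $|S|\binom{n-1}{r-2}=O(n^{r-2})$ suffices whenever $\beta>r-2$; in borderline parameter ranges a sharper accounting of $(r-2)$-cliques in local neighborhoods is required. This is the quantitative reason the proof imposes the two-sided confinement of $\beta$ at the outset, with the lower endpoint tuned precisely so that the Kővári--Sós--Turán-type bound for $K_{a-1,b}$-free graphs is beaten after the deletion.
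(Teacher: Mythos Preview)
Your overall strategy coincides with the paper's: assuming $|X|\ge t+1$, fix $x_1,\ldots,x_{t+1}\in X$ and greedily build pairwise disjoint copies of $K_{a,b}$, one through each $x_i$, by locating a $K_{a-1,b}$ inside $N(x_i)$ minus the already-used vertices.

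The gap is in your deletion estimate. You want the number of $K_{r-1}$'s in $N(v)$ meeting $S$ to be $o(n^\beta)$, and you propose the crude bound $|S|\binom{n-1}{r-2}=O(n^{r-2})$. But $\beta<r-2$ does occur under the paper's hypotheses: for instance at $r=6$, $a=2(r-1)=10$ the admissible window for $\beta$ is $(35/9,\,4)$, entirely below $r-2=4$. Your fallback, ``sharper accounting of $(r-2)$-cliques in local neighbourhoods'', amounts to bounding $\sum_{u\in S}k_{r-2}(G[N(v)\cap N(u)])$, and you have no a~priori control on these quantities in a $(t+1)K_{a,b}$-free graph.

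The paper avoids this by reversing the logic. Rather than first subtracting an unconditional deletion loss, it \emph{supposes} $G[N(x_{i+1})\setminus X'_i]$ is $K_{a-1,b}$-free and applies Theorem~\ref{Kr Upper Bound in Kab-free} to bound $k_s$ of that set for \emph{every} $0\le s\le r-1$. Decomposing the $r$-cliques through $x_{i+1}$ by how many of their remaining vertices lie outside $X'_i$ gives
\[
d_{\mathcal H}(x_{i+1})\ \le\ \sum_{s=0}^{r-1}\binom{|X'_i|}{r-s}\,k_s\bigl(G[N(x_{i+1})\setminus X'_i]\bigr)\ \le\ C\,n^{\,r-1-\frac{(r-1)(r-2)}{2(a-1)}},
\]
contradicting $d_{\mathcal H}(x_{i+1})\ge n^\beta/(2|U|)$ by the lower bound on $\beta$. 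The $K_{a-1,b}$-free hypothesis is thus used to control \emph{all} of $d_{\mathcal H}(x_{i+1})$, including the cliques meeting $X'_i$; substituting this contrapositive decomposition for your deletion estimate completes your outline.
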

    \begin{proof}[Proof of Claim~\ref{X small than t+1 Kab}]
        Otherwise we assume that $|X|\geq t+1$ and $X'=\{x_1,\cdots, x_{t+1}\}\subseteq X$. We prove that the graph $G$ contains at least $t+1$ pairwise vertex-disjoint copies of $K_{a,b}$, and each $K_{a,b}$ passes through exactly one vertex from $X'$. Denote by $\mathcal{K}_r(x_1)$ and $\mathcal{K}_r(x_1,s)$ the set of $r$-cliques containing $x_1$, and the set of $r$-cliques containing $x_1$ with exactly $s$ vertices in $V(G)\setminus X'$, respectively. Notice that each $r$-clique in $\mathcal{K}_r(x_1,s)$ forms an $s$-clique in $G[N_G(v)\setminus X']$ and thus, $0\leq s\leq r-1$. If $G[N_{G}(x_1)\setminus X']$ is $K_{a-1,b}$-free, we obtain by Theorem~\ref{Kr Upper Bound in Kab-free} that there exists a constant $C_{s,a,b}$ such that 
        $$k_s(G[N_G(x_1)\setminus X'])\leq C_{s,a,b}d_G(v)^{s-\frac{s(s-1)}{2(a-1)}}\leq C_{s,a,b} n^{s-\frac{s(s-1)}{2(a-1)}}\leq C_{s,a,b}n^{r-1-\frac{(r-1)(r-2)}{2(a-1)}}.$$ 
        Moreover, by the definition of $X$, $x_i$ is contained in at least $\frac{n^{\beta}}{2|U|}$ copies of $K_r$ for each $i\in [t+1]$, which implies that
        $$\begin{aligned}
            \frac{n^{\beta}}{2|U|}&\leq d_{\mathcal{H}}(x_1)=|\mathcal{K}_r(x_1)| = \sum_{s=0}^{r-1}|\mathcal{K}_{r}(x_1,s)|\\ &< \sum_{s=0}^{r-1}\binom{|X'|}{r-s}k_s(G[N_G(x_1)\setminus X'])\\&\leq \sum_{s=0}^{r-1}C_{s,a,b}\binom{t+1}{r-s}n^{r-1-\frac{(r-1)(r-2)}{2(a-1)}}.
        \end{aligned}$$
        However, since $a,b,t$, and $\sum_{s=0}^{r-1} C_{s,a,b} \binom{t+1}{r-s}$ are all constants, the above inequality fails to hold according to our choice of $\beta$ when $n$ is sufficiently large. Consequently, there exists a $K_{a-1,b}$ in $N_G(x_1)\setminus X'$, which forms a $K_{a,b}$ together with $x_1$. Now we assume that we have found $i$ vertex-disjoint $K_{a,b}$ for $i\in[t]$, say $B_1,\cdots,B_i$, each passing through exactly one of $x_1,\ldots,x_i$ and avoiding all other vertices in $X'$. Let $X'_{i}=X'\cup\left(\cup_{j=1}^{i}V(B_i)\right)$ be the union of these vertices of $K_{a,b}$ and $X'$. We denote by $\mathcal{K}_r(x_{i+1})$ the collection of $K_r$ containing $x_{i+1}$, and by $\mathcal{K}_r(x_{i+1},s)$ those $K_r$ containing $x_{i+1}$ with exactly $s$ vertices in $N_{G}(x_{i+1})\setminus X'_{i}$. Consequently, we have $0\leq s\leq r-1$. If $G[N_{G}(x_{i+1})\setminus X'_{i}]$ is $K_{a-1,b}$-free, similar to the proof above, there exists constants $C'_{s,a,b}$  for $0\leq s\leq r-1$ such that $$\begin{aligned}
            \frac{n^{\beta}}{2|U|} &\leq d_{\mathcal{H}}(x_{i+1})=|\mathcal{K}_{r}(x_{i+1})|\leq \sum_{s=0}^{r-1}|\mathcal{K}_{r}(x_{i+1},s)|\\ &\leq \sum_{s=0}^{r-1} \binom{|X'_{i}|}{r-s} k_{s}\left(G[N_{G}(x_{i+1})\setminus X'_{i}]\right)\\&\leq \sum_{s=0}^{r-1} \binom{|X'_{i}|}{r-s} C'_{s,a,b} n^{s-\frac{s(s-1)}{2(a-1)}}\\ 
            &\leq \left(\sum_{s=0}^{r-1} \binom{|X'_{i}|}{r-s} C'_{s,a,b}\right) n^{r-1-\frac{(r-1)(r-2)}{2(a-1)}}.
        \end{aligned}$$
    The last inequality holds because $f(s)=s-\frac{s(s-1)}{2(a-1)}$ is non-decreasing when $0\leq s\leq r-1\leq a$. Again, since $a,b$ and $t$ are constants and $|X_{i}'| \leq (a+b)i+t \leq (a+b+1)t$, the inequality fails to hold when $n$ is sufficiently large. Therefore, $N_G(x_{i+1})\setminus X'_i$ contains a $K_{a-1,b}$, and thus we find a $K_{a,b}$ containing $x_{i+1}$ that is vertex-disjoint from all previously found copies of $K_{a,b}$ . Continuing this process, we can obtain $t+1$ vertex-disjoint copies of $K_{a,b}$, which contradicts the assumption that $G$ is $(t+1)K_{a,b}$-free.
    \end{proof}
    Now we define $X_1=X\cap U$.
    \begin{claim}\label{XU=t Kab}
    $|X_1|=t$.
    \end{claim}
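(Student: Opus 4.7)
The plan is to prove $|X_1| \geq t$ by contradiction; combined with Claim~\ref{X small than t+1 Kab} and $X_1 \subseteq X$, this forces $|X_1| = t$. Suppose for contradiction that $|X_1| \leq t-1$. The strategy parallels the proof of Claim~\ref{X is not empty Kab}: I would bound $k_r(G)$ from above by classifying each copy of $K_r$ according to whether it meets $U \setminus X_1$, and then compare the resulting estimate with the standing hypothesis $k_r(G) > \sum_{i=0}^{r} \binom{t}{r-i}\mathrm{ex}(n-t,K_i,K_{a,b})$.

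For the first piece, every $v \in U \setminus X_1$ lies in $V(\mathcal{H}) \setminus X$ and thus satisfies $d_{\mathcal{H}}(v) < n^{\beta}/(2|U|)$; hence the total number of $K_r$'s meeting $U \setminus X_1$ is at most $\sum_{v \in U \setminus X_1} d_{\mathcal{H}}(v) \leq |U| \cdot n^{\beta}/(2|U|) = n^{\beta}/2$. For the second piece, any $K_r$ contained in $G[X_1 \cup W]$ has some $s$ vertices in $X_1$ and $r-s$ in $W$, with the latter forming a $K_{r-s}$ in the $K_{a,b}$-free graph $G[W]$. Using $|X_1| \leq t-1$ and the monotonicity of $\mathrm{ex}(\cdot,K_i,K_{a,b})$ in its first argument,
\[
\#\{K_r \subseteq G[X_1 \cup W]\} \leq \sum_{s=0}^{r} \binom{|X_1|}{s}\,\mathrm{ex}(n-|U|, K_{r-s}, K_{a,b}) \leq \sum_{i=0}^{r} \binom{t-1}{r-i}\,\mathrm{ex}(n-t, K_i, K_{a,b}).
\]

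Adding the two pieces gives $k_r(G) \leq n^{\beta}/2 + \sum_{i=0}^{r} \binom{t-1}{r-i}\mathrm{ex}(n-t, K_i, K_{a,b})$. Pascal's rule yields $\binom{t}{r-i} - \binom{t-1}{r-i} = \binom{t-1}{r-i-1}$, and taking $i = r-1$ alone produces an excess of $\binom{t-1}{0}\,\mathrm{ex}(n-t, K_{r-1}, K_{a,b}) \geq n^{\beta} > n^{\beta}/2$ by the very choice of $\beta$. Consequently $\sum_{i=0}^{r} \binom{t}{r-i}\mathrm{ex}(n-t, K_i, K_{a,b}) > k_r(G)$, contradicting the assumption and forcing $|X_1| \geq t$.

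The main obstacle is essentially bookkeeping rather than any deep idea: one must verify that the $n^{\beta}/2$ slack arising from the low-degree vertices in $U \setminus X_1$ is strictly absorbed by the Pascal-difference gap between $\binom{t}{r-i}$ and $\binom{t-1}{r-i}$. This is exactly why $\beta$ was chosen below the exponent of $\mathrm{ex}(n,K_{r-1},K_{a,b})$: it guarantees that the single $i = r-1$ term in the difference already dominates $n^{\beta}/2$ for $n$ large, making the rest of the comparison automatic.
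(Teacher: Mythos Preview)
Your proposal is correct and follows essentially the same approach as the paper: decompose $K_r$'s according to whether they meet $U\setminus X_1$, bound the first piece by $n^{\beta}/2$ using the low-degree condition, bound the second piece inside $G[X_1\cup W]$ via $\binom{|X_1|}{r-i}\,\mathrm{ex}(n-|U|,K_i,K_{a,b})$, and then use the Pascal gap at $i=r-1$ together with the choice of $\beta$ to derive the contradiction. The only cosmetic difference is that you make the Pascal-rule step and the reduction $|X_1|\le t$ via $X_1\subseteq X$ explicit, whereas the paper writes the final chain of inequalities directly.
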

    \begin{proof}[Proof of Claim~\ref{XU=t Kab}]
    We assume by contradiction that $|X_1|\leq t-1$. Recall that $|U| = (a+b)t$, $d_{\mathcal{H}}(u)\leq \frac{n^{\beta}}{2|U|}$ for each $u\in U\setminus X_1$, and $G[W]$ is $K_{a,b}$-free, we have that
    $$\begin{aligned}
        k_r(G)&\leq \sum_{u\in U\setminus X_1}d_{\mathcal{H}}(u)+\sum_{i=0}^{r}\binom{|X_1|}{r-i}\cdot k_{i}(G[W])\\ &\leq \frac{n^{\beta}}{2|U|}\cdot(|U|-|X_1|)+\sum_{i=0}^{r}\binom{|X_1|}{r-i}\cdot \mathrm{ex}(n-|U|,K_{i},K_{a,b})\\ &<\frac{n^{\beta}}{2}+\sum_{i=0}^{r}\binom{t-1}{r-i}\cdot \mathrm{ex}(n-t,K_{i},K_{a,b})\\ &\leq \frac{n^{\beta}}{2}+\sum_{i=0}^{r}\binom{t}{r-i}\cdot \mathrm{ex}(n-t,K_{i},K_{a,b})-\mathrm{ex}(n-t,K_{r-1}, K_{a,b})\\ &\leq \sum_{i=0}^{r}\binom{t}{r-i}\mathrm{ex}(n-t,K_i,K_{a,b})
    \end{aligned}$$
    holds for sufficiently large $n$, a contradiction.
    \end{proof}
    Let $X_2=V(G)\setminus X_1$.
    \begin{claim}\label{G[X2] is Kab free}
        $G[X_2]$ is $K_{a,b}$-free.
    \end{claim}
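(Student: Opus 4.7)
The plan is to argue by contradiction in direct parallel to Claim~\ref{X small than t+1 Kab}: assume $G[X_2]$ contains a copy $B$ of $K_{a,b}$ and then manufacture, one by one, copies of $K_{a,b}$ through each $x_i \in X_1$ that together with $B$ form $t+1$ pairwise vertex-disjoint copies of $K_{a,b}$, contradicting the $(t+1)K_{a,b}$-freeness of $G$. Since $X_1 \subseteq X$ is disjoint from $X_2 \supseteq V(B)$, the starting copy $B$ is automatically disjoint from all of $X_1$, so the seed of the induction is free.

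Write $X_1 = \{x_1,\ldots,x_t\}$. I would iteratively construct, for $i = 1,\ldots,t$, a copy $K^{(i)}$ of $K_{a,b}$ containing $x_i$ and otherwise disjoint from $Y_i := X_1 \cup V(B) \cup \bigcup_{j < i} V(K^{(j)})$. Crucially, $|Y_i| \leq t + (a+b)t = (a+b+1)t$ throughout, which is a constant independent of $n$. To produce $K^{(i)}$ it is enough to find a $K_{a-1,b}$ in the induced subgraph $G[N_G(x_i) \setminus Y_i]$, since attaching $x_i$ as one side of the biclique turns it into the desired $K^{(i)}$.

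If no such $K_{a-1,b}$ exists, then $G[N_G(x_i) \setminus Y_i]$ is $K_{a-1,b}$-free, and a verbatim repetition of the counting in Claim~\ref{X small than t+1 Kab} applies: partition the $r$-cliques through $x_i$ by the number $s$ of their vertices lying outside $Y_i$ (so $0 \leq s \leq r-1$), bound the outside contribution by Theorem~\ref{Kr Upper Bound in Kab-free} applied to $G[N_G(x_i) \setminus Y_i]$, and use the monotonicity of $s \mapsto s - \frac{s(s-1)}{2(a-1)}$ on $\{0,\ldots,r-1\}$ (which holds since $r - 1 \leq a - 1$) to collect terms. This yields
\[
\frac{n^{\beta}}{2|U|} \;\leq\; d_{\mathcal{H}}(x_i) \;\leq\; \left(\sum_{s=0}^{r-1} \binom{|Y_i|}{r-s}\, C_{s,a,b}\right) n^{\,r-1-\frac{(r-1)(r-2)}{2(a-1)}},
\]
which contradicts the choice $\beta > r-1 - \frac{(r-1)(r-2)}{2(a-1)}$ for $n$ sufficiently large. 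Hence the induction carries through, and $B, K^{(1)}, \ldots, K^{(t)}$ form $t+1$ pairwise vertex-disjoint copies of $K_{a,b}$ in $G$, contradicting its $(t+1)K_{a,b}$-freeness.

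The main obstacle is really just bookkeeping: one must verify that $|Y_i|$ stays bounded by a constant across all $t$ iterations, so that the coefficient in the displayed inequality does not absorb a factor growing with $n$. Since $a$, $b$, and $t$ are fixed and only finitely many vertices are added to $Y_i$ at each step, this is immediate, and the contradiction closes the argument exactly as in Claim~\ref{X small than t+1 Kab}.
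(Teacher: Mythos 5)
Your proposal is correct and matches the paper's intent: the paper's proof of this claim is a one-line deferral to ``a similar proof of Claim~\ref{X small than t+1 Kab}'', and your argument is precisely that argument carried out in detail, seeding the induction with the copy $B\subseteq G[X_2]$ (automatically disjoint from $X_1$) and greedily extracting a $K_{a,b}$ through each $x_i\in X_1$ via the same degree-counting contradiction. The bookkeeping bound $|Y_i|\leq (a+b+1)t$ and the use of the monotonicity of $s\mapsto s-\frac{s(s-1)}{2(a-1)}$ are exactly as in the paper.
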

    \begin{proof}[{Proof of Claim~\ref{G[X2] is Kab free}}]
    If there exists a copy of $K_{a,b}$ denoted by $K$ in $G[X_2]$, by a similar proof of Claim~\ref{X small than t+1 Kab}, we can find $t$ copies of vertex disjoint $K_{a,b}$ in $G[V(G)\setminus V(K)]$, together with $K$, we will find $(t+1)K_{a,b}$ in $G$.
    \end{proof}
    Therefore, we can partite $V(G)=X_1\cup X_2$, where $|X_1|=t$ and $G[X_2]$ is $K_{a,b}$-free. Thus, $$k_r(G)\leq \sum_{i=0}^{r}\binom{t}{r-i}\mathrm{ex}(n-t,K_i,K_{a,b}),$$ 
    which contradicts to our previous assumption.
    Therefore, we have $$k_r(G)\leq \sum_{s=0}^{r}\binom{t}{r-s}\left(\frac{1}{s!}+o(1)\right)\left(b-1\right)^{\frac{s(s-1)}{2a}}(n-t)^{s-\frac{s(s-1)}{2a}}$$ according to Theorem~\ref{Kr Upper Bound in Kab-free}.
\end{proof}

Similar to Theorem~\ref{Kr(t+1)Kab}, we can derive Theorem~\ref{main-theorem}. Here we provide a sketch of the proof for Theorem~\ref{main-theorem}, where the key difference from the proof of Theorem~\ref{Kr(t+1)Kab} lies in the range selection for the high-degree vertices $d_\mathcal{H}(u)$. %We remark that the condition $a \geq r-1$ in Theorem~\ref{Kr(t+1)Kab} is imposed to ensure that the function $f(s) = s - \frac{s(s-1)}{2(a-1)}$ is non-decreasing for $0 \leq s \leq r-1$.
\begin{proof}[Sketch of proof of Theorem~\ref{main-theorem}]
   The lower bound can be constructed as follows. For each $2 \leq s \leq r$, let $H_s$ be a $C_{2k}$-free graph with $(n-t)$ vertices satisfying $k_s(H_s) = \mathrm{ex}(n-t,K_s,C_{2k})$. We select $H = H_i$ where the index $2 \leq i \leq r$ maximizes the expression:
$$
\binom{t}{r-i}\mathrm{ex}(n-t,K_i,C_{2k}) = \max_{2 \leq s \leq r} \left\{ \binom{t}{r-s} \mathrm{ex}(n-t,K_s,C_{2k}) \right\}.
$$
The graph $G = K_t \vee H$, which is evidently $(t+1)C_{2k}$-free, then contains the required number of $K_r$ copies for our purposes.
    
    Now we consider the upper bound. We assume by contradiction that there is a $(t+1)C_{2k}$-free graph $G$ of order $n$ with $$k_r(G)>\sum_{s=0}^{r}\binom{t}{r-s}\mathrm{ex}(n-t,K_s,C_{2k}).$$
    We also assume that $G$ has $t$ vertex disjoint $C_{2k}$ and let $U$ be the collection of $2kt$ vertices of these cycles. Define an $r$-uniform hypergraph $\mathcal{H}$ such that $V(\mathcal{H})=V(G)$ and $x_1x_2\cdots x_r$ is an edge in $\mathcal{H}$ if and only if they induce a copy of $K_r$ in G. Define $$X=\{v\in V(\mathcal{H}):d_{\mathcal{H}}(v)\geq \frac{n^{1+\epsilon}}{2|U|}\},$$ where $0<\epsilon<\frac{1}{2k-1}$ is determined by Theorem~\ref{K_r is linear in Pt-free} and Lemma~\ref{Lower bound edge C2k-free}.
    Set $X_1=X\cap U$ and $X_2=V(G)\setminus X_1$. We can prove that when $n$ is sufficiently large, we have $|X_1|=t$ and $G[X_2]$ is $C_{2k}$-free, which leads to a contradiction with our assumption.
\end{proof}

\section*{Acknowledge}
We gratefully acknowledge the Extremal Combinatorics and Probability Group at the Institute for Basic Science (IBS) for organizing the 3rd ECOPRO Student Research Program. In particular, Caihong Yang and Jiasheng Zeng would like to express their sincere appreciation to Professor Hong Liu for providing this exceptional research opportunity. This work was supported by the Institute for Basic Science (IBS-R029-C4), the National Key R\&D Program of China (Grant No. 2023YFA1010202), and the Central Guidance on Local Science and Technology Development Fund of Fujian Province (Grant No. 2023L3003), with the latter specifically supporting Caihong Yang's research.

\bibliographystyle{plain}
\bibliography{countinggraph}

\end{document}